\title {A congruence property of irreducible\\ Laguerre polynomials in two variables}
\author{Nikolai A. Krylov and Zhangyuan Li}
\date {}
\begin{document}

\newtheorem{thm}{Theorem}
\newtheorem{lem}{Lemma}
\newtheorem{claim}{Claim}
\newtheorem{dfn}{Definition}
\newtheorem{cor}{Corollary}
\newtheorem{prop}{Proposition}
\newtheorem{example}{Example}

\def\natu 		{\mathbb N}
\def\inte 		{\mathbb Z}
\def\rati 		{\mathbb Q}
\def\real		{\mathbb R}
\def\GCD 		{{\rm gcd}}

\def\lla 		{\longleftarrow}
\def\lra 		{\longrightarrow}
\def\ra 		{\rightarrow}
\def\hra 		{\hookrightarrow}
\def\lmt 		{\longmapsto}

\maketitle

\parskip=3mm

\begin{abstract}
In this paper we introduce a version of irreducible Laguerre polynomials in two variables and prove for it a congruence property, 
which is similar to the one obtained by Carlitz for the classical Laguerre polynomials in one variable.
\end{abstract}

\noindent {\bf Keywords}: Laguerre polynomials; polynomials modulo an integer;\\
{\bf 2010 Mathematics Subject Classification}: 33C45, 33C50, 11C08.

\section{Introduction}

The generalized Laguerre polynomials in one variable are defined for an arbitrary integer $n\geq 0$ and a parameter $\alpha > -1$ 
by Rodrigues' relation 
(see, for example, \cite{Dunkl}, \S 1.4.2)  
$$
L_n^{\alpha} (x) = \frac{1}{n!} e^xx^{-\alpha}\cdot D^n\left(e^{-x}x^{n+\alpha}\right),~~~~\mbox{where}~~~~D^n:=\frac{d^n}{dx^n}.
$$
In this paper we will consider only non-negative integer values for the parameter $\alpha$, i.e. we assume from now on that 
$\alpha \in \natu_0=\natu\cup\{0\}$. 
Expanding the definition of $L_n^{\alpha}(x)$ using the $n$-fold product rule and the Pochhammer symbol defined for all $x\in\real$ by
$$
(x)_0 = 1,~~~(x)_n = \prod\limits_{i=1}^n (x+i-1)~~~~\mbox{for all}~~n\in\natu,
$$
one immediately comes to the following explicit formulas (\cite{Dunkl}, \S 1.4.2):

\begin{equation}
\label{LaguerreX}
L_n^{\alpha} (x)  = \sum\limits_{j=0}^n\frac{(\alpha +1)_n\cdot (-n)_j}{(\alpha+1)_j \cdot n!}\cdot \frac{x^j}{j!} = \sum\limits_{j=0}^n\frac{(-1)^j}{j!}\cdot{n+\alpha \choose n - j}\cdot x^j
\end{equation}

It was established by Schur (\cite{Schur}) in 1929 that $L_n (x) = L_n^0 (x)$ are irreducible over the rationals for all $n\in\natu$. Recently
this result was generalized by 
Filaseta and Lam who proved that for all but finitely many $n\in\natu$, the polynomials $L_n^{\alpha}(x)$ where $\alpha$ 
is a rational number which is not a negative integer, are irreducible over $\rati$ (see \cite{Filaseta}). 
Note that reducible $L_n^{\alpha}$ do exist, for example, $L_2^2(x) = 1/2(x-2)(x-6)$. One of the key 
characteristics of the Laguerre polynomials $L_n^{\alpha}(x)$ (with a fixed $\alpha>-1$) is that they are orthogonal over the interval $(0,\infty)$ 
with respect to the weight function $\omega(x) = e^{-x}x^{\alpha}$ (see chapter 1 of \cite{Dunkl}). They also satisfy other interesting properties, including 
the one due to Carlitz (\cite{Carlitz}), who proved in 1954 that for all $n,m\in\natu$, and a rational number $\alpha$ that is integral $\pmod{m}$,

\begin{equation}
\label{Carlitz}
(n+m)!L_{n+m}^{\alpha}(x) \equiv n!L_n^{\alpha}(x)\cdot m!L_m^{\alpha}(x) \pmod{m}.
\end{equation}

There are various examples of families of orthogonal polynomials in several variables and certain properties of the following 
multivariable Laguerre polynomials have been studied in \cite{Dunkl} and \cite{Aktas}.

\begin{equation}
\label{LaguerreMR}
L_{n_1,\ldots,n_r}^{\alpha_1,\ldots,\alpha_r}(x_1,\ldots,x_r)=L_{n_1}^{\alpha_1}(x_1)\cdot L_{n_2}^{\alpha_2}(x_2) \cdot \ldots \cdot L_{n_r}^{\alpha_r}(x_r)
\end{equation}

\noindent Such multivariable Laguerre polynomials are orthogonal with respect to the weight function, which is the product of the corresponding 
weight functions $x_1^{\alpha_1}\cdot\ldots\cdot x_r^{\alpha_r}\cdot e^{-(x_1+\ldots + x_r)}$ 
over the domain, which is the cartesian product of the corresponding domains $\real^d_+=\{(x_1,\ldots,x_r)~|~0<x_j<\infty,~~j\in\{1,2,\ldots,r\}\}$ 
(see \cite{Aktas} and \cite{Dunkl}, \S 2.3.5). It is also clear from (\ref{LaguerreMR}) that such multiple Laguerre polynomials are 
reducible as soon as they have more than one variable.

In this paper, we introduce a version of two-variable Laguerre polynomials $L_{n,m}(x,y)$, which are irreducible over the rationals 
and prove that such Laguerre polynomials satisfy a congruence relation similar to (\ref{Carlitz}). 

The rest of this paper is divided up as follows. In \S2, we introduce our version 
of Laguerre polynomials in $x$ and $y$ using Rodrigues formula with partial derivatives and derive the corresponding 
explicit formulas similar to (\ref{LaguerreX}). In \S3, we establish several auxiliary lemmas and use them to give another proof of 
the congruence (\ref{Carlitz}) of Carlitz. \S 4 contains a proof of the corresponding congruence for two-variable Laguerre 
polynomials (see (\ref{Formula12}) below). In \S 5, we discuss the irreducibility over $\rati$. 
Other properties of $L_{n,m}(x,y)$, including the orthogonality, shall be discussed somewhere else.

\noindent {\bf Acknowledgement}: The authors gratefully acknowledge support from the Siena Summer Scholars program, 
that funds scholarly activities in which faculty members and students of Siena College collaborate during the summer. 
This article is a result of such collaboration.

\section{Laguerre Polynomials in two variables}

As we already wrote, the Laguerre polynomials in $x$ are defined for an arbitrary integer $n\geq 0$ and the parameter $\alpha = 0$ 
by Rodrigues' relation $L_n(x) = \frac{1}{n!} e^x\cdot D^n\left(e^{-x}x^n \right)$. We apply this approach to define the Laguerre polynomials in two variables as follows.

\begin{dfn}

For all $n,m\in \natu_0$ let
$$
L_{n,m}(x,y):=\frac{1}{n!\cdot m!} e^{(x+y)/2}\cdot D_{\partial}^{n+m}\left(e^{(-x-y)/2} x^ny^m\right),~~~\mbox{where}~~~D_{\partial}(f(x,y)):=f_x(x,y) + f_y(x,y).
$$

\end{dfn}

\noindent{\underline{Note}: {\it Since $e^{(x+y)/2}\cdot D_{\partial}\left(e^{(-x-y)/2}f(x,y)\right) = f_x(x,y) + f_y(x,y) - f(x,y)$, if it happens that $f(x,y)$ 
depends only on a single variable $x$ we obtain 
$$
e^{(x+y)/2}\cdot D_{\partial}\left(e^{(-x-y)/2}f(x,y)\right) = \frac{d}{dx}f(x) - f(x) = e^{x}\cdot D\left(e^{-x}f(x)\right),
$$
and hence naturally 
$$
L_{n,0}(x,y) = \frac{1}{n!\cdot 0!} e^{(x+y)/2}\cdot D_{\partial}^{n}\left(e^{(-x-y)/2} x^n y^0\right) = L_n(x),~~~~~ L_{0,m}(x,y) = L_m(y).
$$
By the same argument we also have 
$$
L_{n,m}(x,x) = \frac{1}{n!\cdot m!} e^{(x+y)/2}\cdot D_{\partial}^{n+m}\left(e^{(-x-y)/2} x^{n+m}\right) = \binom{n+m}{n} \cdot L_{n+m}(x).
$$}

\noindent Before giving the explicit formulas for $L_{n,m}(x,y)$ we prove the following formula.

\begin{lem}
For all $n,~m,~t\in\natu_0$ we have
\begin{equation}
\label{Formula1}
e^{(x+y)/2}\cdot D^t_{\partial}\left(e^{(-x-y)/2}x^n y^m \right) = \sum\limits_{i=0}^{\min(t,m)} \binom{t}{i}\cdot\frac{m!}{(m-i)!}\cdot 
\biggl(e^x D^{t-i}\left( e^{-x} x^n\right)\biggr)\cdot y^{m - i},
\end{equation}
where $D=\frac{d}{dx}$ is as in the definition of the classical Laguerre polynomials.
\end{lem}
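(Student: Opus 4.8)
The plan is to induct on $t$, exploiting the factorization already recorded in the Note preceding the statement. Write $\mathcal{L}(f):=f_x+f_y-f$, so that the Note reads $e^{(x+y)/2}D_\partial\bigl(e^{(-x-y)/2}f\bigr)=\mathcal{L}(f)$. Inserting $e^{(-x-y)/2}e^{(x+y)/2}=1$ and telescoping gives $e^{(x+y)/2}D_\partial^{t}\bigl(e^{(-x-y)/2}f\bigr)=\mathcal{L}^{t}(f)$, so the left-hand side of (\ref{Formula1}) is simply $\mathcal{L}^{t}(x^ny^m)$, and if $G_t$ denotes that left-hand side it obeys the recursion $G_{t+1}=\mathcal{L}(G_t)$. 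The base case $t=0$ is immediate: the left side is $x^ny^m$, and on the right only the $i=0$ term survives, again giving $x^ny^m$.

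The one-variable ingredient I would isolate first. Writing $\phi_s(x):=e^xD^s(e^{-x}x^n)$ for the $x$-factors appearing on the right, the product rule gives $\phi_s'(x)=e^xD^s(e^{-x}x^n)+e^xD^{s+1}(e^{-x}x^n)=\phi_s(x)+\phi_{s+1}(x)$, hence $\phi_s'-\phi_s=\phi_{s+1}$. This is the mechanism that raises the order of the $x$-derivative by one each time $\mathcal{L}$ is applied, and it is what manufactures the full operator $e^xD^{t-i}e^{-x}$ on the $x$-side.

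For the inductive step, assume (\ref{Formula1}) at level $t$ and abbreviate its right side as $F=\sum_i c_i\,\phi_{t-i}(x)\,y^{m-i}$ with $c_i=\binom{t}{i}\frac{m!}{(m-i)!}$; it is convenient to adopt the convention $\frac{m!}{(m-i)!}=0$ for $i>m$, so that this falling-factorial coefficient itself truncates the sum and the $\min(t,m)$ bound need not be carried through the algebra. Applying $\mathcal{L}$, the relation $\phi_s'-\phi_s=\phi_{s+1}$ turns $F_x-F=\sum_i c_i(\phi_{t-i}'-\phi_{t-i})y^{m-i}$ into $\sum_i c_i\,\phi_{(t+1)-i}(x)\,y^{m-i}$, while $F_y=\sum_i c_i(m-i)\phi_{t-i}(x)y^{m-i-1}$. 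Reindexing the second sum by $j=i+1$ turns its coefficient into $c_{j-1}(m-j+1)=\binom{t}{j-1}\frac{m!}{(m-j)!}$ and aligns it with $\phi_{(t+1)-j}(x)y^{m-j}$. Collecting the coefficient of $\phi_{(t+1)-j}(x)y^{m-j}$ then yields $\frac{m!}{(m-j)!}\bigl(\binom{t}{j}+\binom{t}{j-1}\bigr)=\binom{t+1}{j}\frac{m!}{(m-j)!}$ by Pascal's rule, which is exactly the coefficient demanded by (\ref{Formula1}) at level $t+1$.

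The main obstacle is one of organization rather than depth. Expanding $D_\partial^t=(\partial_x+\partial_y)^t$ directly and using $e^{(-x-y)/2}x^ny^m=(e^{-x/2}x^n)(e^{-y/2}y^m)$ produces a perfectly valid but differently grouped identity whose $x$-factors carry $e^{\pm x/2}$ rather than the $e^{\pm x}$ of (\ref{Formula1}); reconciling the two groupings by hand is awkward, since the two $x$-side operators $\partial_x-\tfrac12$ and $\partial_x-1$ genuinely differ. Induction on $t$ sidesteps this, because the desired grouping is reproduced automatically by $\phi_s'-\phi_s=\phi_{s+1}$ together with Pascal's rule. The only point requiring care is the bookkeeping at the summation endpoints (the $j=0$ and $j=\min(t+1,m)$ terms), which the convention $\frac{m!}{(m-i)!}=0$ for $i>m$ handles cleanly.
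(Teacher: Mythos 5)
Your proof is correct and follows essentially the same route as the paper: induction on $t$, using the one-variable relation $e^{(x+y)/2}D_\partial\bigl(e^{(-x-y)/2}g(x)y^k\bigr)=e^xD(e^{-x}g(x))y^k+kg(x)y^{k-1}$ to raise the order of the $x$-derivative and Pascal's rule to merge the binomial coefficients. Your bookkeeping is cleaner than the paper's (the convention $m!/(m-i)!=0$ for $i>m$ replaces the paper's separate discussion of the case $k-1=m$), but the argument is the same.
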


\begin{proof}
We use induction on $t$ so suppose $t=0$. Hence $e^{(x+y)/2}\cdot D_{\partial}^{t}(e^{(-x-y)/2} x^n y^m) = x^n y^m$ and 
the R.H.S. of (\ref{Formula1}) also yields a single term $x^n y^m$. Now assume that (\ref{Formula1}) is true for $t=k-1$, then  
$$
e^{(x+y)/2}\cdot D^k_{\partial}\left(e^{(-x-y)/2}x^n y^m \right) = e^{(x+y)/2}\cdot D_{\partial}\left( e^{(-x-y)/2}\cdot e^{(x+y)/2} \cdot D^{k-1}_{\partial}\left(e^{(-x-y)/2}x^n y^m \right) \right) = 
$$
by induction hypothesis (and assuming for a moment that $k-1<m$)
$$
= e^{(x+y)/2}\cdot D_{\partial}\biggl(e^{(-x-y)/2}\biggl( e^x D^{k-1}\left( e^{-x} x^n\right) y^m + \cdots + \binom{k-1}{i}\frac{m!}{(m-i)!} e^x D^{k-1-i}\left( e^{-x} x^n\right)y^{m-i} + 
$$
\begin{equation}
\label{Formula2}
+ \binom{k-1}{i+1}\frac{m!}{(m - i - 1)!} e^x D^{k-1-i-1}\left( e^{-x} x^n\right)y^{m-i-1}  + \cdots +  \frac{m!}{(m-k+1)!}x^n y^{m-k+1} \biggr)\biggr). 
\end{equation}

\noindent Since $e^{(x+y)/2}\cdot D_{\partial}\left(e^{(-x-y)/2} g(x)y^k\right) =\bigl(g'(x)-g(x)\bigr)y^k + kg(x) y^{k -1} = 
e^x\cdot D(e^{-x}g(x)) y^k + k g(x) y^{k -1}$,
replacing each term in (\ref{Formula2}) by the corresponding two terms we can write 
$$
(\ref{Formula2}) = e^x \cdot D^k (e^{-x} x^n ) y^m + m e^x \cdot D^{k -1} (e^{-x} x^n ) y^{m-1} + \cdots +  \binom{k-1}{i}\frac{m!}{(m-i)!} e^x \cdot D^{k -i}(e^{-x} x^n) y^{m-i} + 
$$
$$
+ \binom{k-1}{i}\frac{m!}{(m-i -1)!} e^x \cdot D^{k -i - 1}(e^{-x} x^n) y^{m-i-1} +  \binom{k-1}{i+1}\frac{m!}{(m - i - 1)!} e^x D^{k -1 -i}\left( e^{-x} x^n\right)y^{m-i-1} + 
$$
$$
+  \binom{k-1}{i+1}\frac{m!}{(m - i - 2)!} e^x D^{k-1-i-1}\left( e^{-x} x^n\right)y^{m-i-2} + \cdots +  
$$
\begin{equation}
\label{Formula3}
 + \frac{m!}{(m-k+1)!}e^x D(e^{-x}x^n) y^{m-k+1}  +  \frac{m!}{(m-k)!} x^n y^{m-k}. 
\end{equation}
 
Notice that if $m=k-1$ then (\ref{Formula3}) ends with $ \frac{m!}{(m-k+1)!}e^x D(e^{-x}x^n) y^{m-k+1} = m! e^x D(e^{-x} x^n)$. Now, combining in (\ref{Formula3}) 
the coefficients of terms that have the same degree in $y$, introducing $j:=i+1$, and using the identity $\binom{a-1}{b}+\binom{a-1}{b+1}=\binom{a}{b+1}$ 
we obtain further that when $k\leq m$,
$$
(\ref{Formula3}) = \sum\limits_{j=0}^{k} \binom{k}{j}\cdot\frac{m!}{(m -j)!}\cdot \biggl(e^x D^{k - j}\left( e^{-x} x^n\right)\biggr)\cdot y^{m - j},
$$
which finishes the induction and proves lemma. If $k-1=m$ the alternation in formula is obvious.
\end{proof}

\noindent Next theorem gives the explicit formulas for $L_{n,m}(x,y)$ (cf. with (\ref{LaguerreX})).

\begin{thm}

For all $n,m\in\natu_0$ we have
$$
L_{n,m}(x,y)  = \sum\limits_{i=0}^m\frac{(-1)^i}{i!}\cdot{m+n\choose m - i}\cdot L_n^{i}(x)\cdot y^i = 
 \sum\limits_{s=0}^n\frac{(-1)^s}{s!}\cdot{n+m\choose n - s}\cdot L_m^{s}(y)\cdot x^s,~~~~~\mbox{and}
$$
\begin{equation}
\label{LagXY}
L_{n,m}(x,y)  = \sum\limits_{i=0}^m\sum\limits_{s=0}^n\frac{(-1)^{i+s}}{i!\cdot s!}\cdot{m+n\choose m - i}
\cdot{n+i\choose n - s}\cdot x^s \cdot y^i
\end{equation}
\end{thm}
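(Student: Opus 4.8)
The plan is to derive the first of the three formulas directly from Lemma 1 and then obtain the other two with little extra work. First I would set $t=n+m$ in (\ref{Formula1}); since $m\le n+m$ the upper limit $\min(t,m)$ is exactly $m$, and after dividing by $n!\,m!$ the definition of $L_{n,m}$ gives
$$
L_{n,m}(x,y) = \frac{1}{n!}\sum_{i=0}^m \binom{n+m}{i}\frac{1}{(m-i)!}\Bigl(e^x D^{n+m-i}(e^{-x}x^n)\Bigr)\,y^{m-i},
$$
where the $m!$ from the lemma has cancelled the $m!$ from the definition. Reindexing by $j=m-i$ turns this into a sum over powers $y^{\,j}$ with coefficient $\frac{1}{n!\,j!}\binom{n+m}{m-j}\,e^x D^{n+j}(e^{-x}x^n)$, so the entire theorem reduces to identifying the single expression $e^x D^{n+j}(e^{-x}x^n)$.

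The key step, and the one I expect to demand the most care, is the identity
$$
\frac{1}{n!}\,e^x D^{n+j}\bigl(e^{-x}x^n\bigr) = (-1)^j\,L_n^{\,j}(x).
$$
The subtlety is that the order of differentiation $n+j$ exceeds the degree $n$, so this is \emph{not} a direct instance of Rodrigues' relation: trying to match it against $L_k^{\alpha}$ would force the parameter $\alpha=i-m<0$, which lies outside the range $\natu_0$ we permit. I would therefore bypass Rodrigues and compute the left-hand side head-on with the Leibniz rule, using $D^l(e^{-x})=(-1)^l e^{-x}$ together with $D^{\,n+j-l}(x^n)=\frac{n!}{(l-j)!}x^{\,l-j}$, which is nonzero only for $l\ge j$. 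Setting $s=l-j$ collapses the sum to $(-1)^j n!\sum_{s=0}^n \frac{(-1)^s}{s!}\binom{n+j}{n-s}x^s$, and using $\binom{n+j}{s+j}=\binom{n+j}{n-s}$ a comparison with the explicit formula (\ref{LaguerreX}) for $L_n^{\,j}(x)$ finishes it. Substituting this identity back into the reindexed sum yields the first formula.

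For the second formula I would invoke symmetry rather than repeat the computation: because $D_\partial=\partial_x+\partial_y$ is symmetric in its two variables, the definition immediately gives $L_{n,m}(x,y)=L_{m,n}(y,x)$, and applying the first formula to $L_{m,n}(y,x)$ and relabeling the summation index produces the stated expansion in powers of $x$. Finally, the double-sum formula (\ref{LagXY}) follows by inserting the explicit expansion $L_n^{\,i}(x)=\sum_{s=0}^n \frac{(-1)^s}{s!}\binom{n+i}{n-s}x^s$ into the first formula and multiplying out; the two sums are already nested with matching index ranges, so no rearrangement is required and the signs and binomial factors combine exactly as claimed.
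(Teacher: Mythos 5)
Your proof is correct, and its skeleton is identical to the paper's: apply Lemma 1 with $t=n+m$, reindex to get $L_{n,m}(x,y)=\sum_{i=0}^m\frac{1}{i!}\binom{m+n}{m-i}\bigl(\frac{1}{n!}e^xD^{n+i}(e^{-x}x^n)\bigr)y^i$, then get the second formula by symmetry and the third by substituting (\ref{LaguerreX}). The one place you diverge is the central identity $\frac{1}{n!}e^xD^{n+i}(e^{-x}x^n)=(-1)^iL_n^i(x)$. The paper writes $\frac{1}{n!}e^xD^{n+i}(e^{-x}x^n)=e^xD^i\bigl(e^{-x}L_n(x)\bigr)$ and invokes the standard recurrences $\frac{d}{dx}L_k^{\alpha}=-L_{k-1}^{\alpha+1}$ and $L_k^{\alpha}=L_k^{\alpha+1}-L_{k-1}^{\alpha+1}$ to conclude $e^xD^t(e^{-x}L_k^{\alpha}(x))=(-1)^tL_k^{\alpha+t}(x)$; you instead expand $D^{n+i}(e^{-x}x^n)$ by the Leibniz rule and match coefficients against (\ref{LaguerreX}) directly. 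Your route is more self-contained (it needs no facts about $L_n^{\alpha}$ beyond the explicit formula, and your observation that Rodrigues' relation cannot be applied verbatim because it would force $\alpha<0$ is a worthwhile point the paper glosses over), while the paper's route is shorter and reuses identities already cited from \cite{Dunkl}. Both are sound; the computation you sketch, with $s=l-j$ and $\binom{n+j}{s+j}=\binom{n+j}{n-s}$, checks out.
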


\begin{proof} 
Using lemma 1 and formula (\ref{Formula1}) we can write 
$$
e^{(x+y)/2}\cdot D^{n+m}_{\partial}\left(e^{(-x-y)/2}x^n y^m \right) = \sum\limits_{j=0}^{m} \binom{m+n}{j}\cdot\frac{m!}{(m-j)!}\cdot \biggl(e^x D^{n+m-j}\left( e^{-x} x^n\right)\biggr)\cdot y^{m - j},
$$
which implies that for $i:=m-j\in\{0,\ldots,m\},~L_{n,m}(x,y)=$
\begin{equation}
\label{Formula4}
\frac{1}{n!\cdot m!}\cdot\sum_{i=0}^{m} \binom{m+n}{m-i}\cdot \frac{m!}{i!}\cdot \biggl(e^x D^{n+i}\left( e^{-x} x^n\right)\biggr)\cdot y^{i} 
=\sum_{i=0}^{m}\frac{1}{i!}\cdot \binom{m+n}{m-i}\cdot \biggl(\frac{1}{n!} e^x D^{n+i}\left( e^{-x} x^n\right)\biggr)\cdot y^{i}
\end{equation}
Since for Laguerre polynomials in a single variable $x$ (see \cite{Dunkl}, \S 1.4.2)
$$
\frac{d}{dx}\bigl(L_k^{\alpha}(x)\bigr) = -L_{k-1}^{\alpha+1}(x)~~~\mbox{and}~~~L_k^{\alpha}(x) = L_k^{\alpha+1}(x) -L_{k-1}^{\alpha+1}(x) 
\Rightarrow e^xD^t(e^{-x}L_k^{\alpha}(x)) = (-1)^t\cdot L_k^{\alpha+t}(x)
$$
we can continue formula (\ref{Formula4}) and write
$$
L_{n,m}(x,y)=\sum_{i=0}^{m} \frac{1}{i!}\cdot \binom{m+n}{m-i}\cdot \biggl(e^x D^i\bigl( e^{-x} L_n(x) \bigr)\biggr)\cdot y^{i} = 
\sum_{i=0}^{m} \frac{(-1)^i}{i!}\cdot \binom{m+n}{m-i}\cdot L_n^i (x) \cdot y^{i},
$$
which gives the first formula in (\ref{LagXY}). The second formula follows either from the symmetry or from an argument similar 
to the one we just gave. To obtain the third formula recall that by (\ref{LaguerreX}),
$$
L_n^{i} (x)  = \sum\limits_{s=0}^n\frac{(-1)^s}{s!}\cdot{n+i \choose n - s}\cdot x^s
$$
and hence
$$
L_{n,m}(x,y) = \sum_{i=0}^{m} \sum_{s=0}^{n}\frac{(-1)^{i+s}}{i!\cdot s!}\binom{m+n}{m-i}\binom{n+i}{n-s}\cdot x^{s}\cdot y^{i},
$$
as was required.
\end{proof}

It is well known (see \cite{Dunkl}, \S 1.4.2 and \S 2.3.5) that Laguerre polynomials in one variable satisfy the differential equation
\begin{equation}
\label{difeq}
x\cdot \frac{d^2}{dx^2}L_n^{\alpha}(x) + (\alpha + 1 -x)\cdot \frac{d}{dx}L_n^{\alpha}(x) + n\cdot L_n^{\alpha}(x) = 0,
\end{equation}
and the multiple Laguerre polynomials $L_{n_1,\ldots,n_r}^{\alpha_1,\ldots,\alpha_r}(x_1,\ldots,x_r)$ (recall (\ref{LaguerreMR}) above) 
satisfy the partial differential equation
$$
\sum_{i=1}^r x_i\cdot \frac{\partial^2}{\partial x_i^2}L_{n_1,\ldots,n_r}^{\alpha_1,\ldots,\alpha_r} + 
\sum_{i=1}^r \bigl((\alpha_i + 1 -x_i)\cdot \frac{\partial}{\partial x_i} 
L_{n_1,\ldots,n_r}^{\alpha_1,\ldots,\alpha_r}\bigr) + n\cdot L_{n_1,\ldots,n_r}^{\alpha_1,\ldots,\alpha_r} =0.
$$
Here is the corresponding analog for the Laguerre polynomial $L_{n,m}(x,y)$.

\begin{lem}
For all $n,m\in\natu_0,~L_{n,m}(x,y)$ satisfies the following system of partial differential equations. 
$$
\begin{pmatrix}
L_{xx} & L_{xy}\\
L_{yx} & L_{yy}\\
\end{pmatrix}\cdot
\begin{pmatrix}
x\\
y\end{pmatrix} + 
\begin{pmatrix}
L_{x} & 0\\
0 & L_{y}\\
\end{pmatrix}\cdot
\begin{pmatrix}
1 - x\\
1 - y\end{pmatrix} +
\begin{pmatrix}
L & 0\\
0 & L\\
\end{pmatrix}\cdot
\begin{pmatrix}
n\\
m\end{pmatrix} = 
\begin{pmatrix}
0\\
0\end{pmatrix},
$$
where we used the notations $L~\mbox{for}~L_{n,m}(x,y)$, $L_x~\mbox{for}~\frac{\partial}{\partial x}L_{n,m}(x,y)$, 
$L_{xy}~\mbox{for}~\frac{\partial^2}{\partial x\partial y}L_{n,m}(x,y)$, and so on.
\end{lem}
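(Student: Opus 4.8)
The plan is to exploit the expansion of $L_{n,m}(x,y)$ in single-variable Laguerre polynomials established in the preceding theorem and to reduce each of the two scalar equations to the classical differential equation (\ref{difeq}). Writing out the matrix identity row by row, the two equations to be verified are
$$
x L_{xx} + y L_{xy} + (1-x) L_x + n L = 0 \quad\text{and}\quad x L_{yx} + y L_{yy} + (1-y) L_y + m L = 0.
$$

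To establish the first equation, I would start from the formula $L_{n,m}(x,y) = \sum_{i=0}^m c_i\, L_n^i(x)\, y^i$, where $c_i = \frac{(-1)^i}{i!}\binom{m+n}{m-i}$, and differentiate term by term, treating each $L_n^i(x)$ as a function of $x$ alone. This gives $L_x = \sum_i c_i (L_n^i)'(x) y^i$, $L_{xx} = \sum_i c_i (L_n^i)''(x) y^i$, and, crucially, $L_{xy} = \sum_i c_i\, i\, (L_n^i)'(x)\, y^{i-1}$. Substituting into the left side and collecting the coefficient of $y^i$ yields
$$
c_i\Bigl[ x (L_n^i)'' + i (L_n^i)' + (1-x)(L_n^i)' + n L_n^i \Bigr] = c_i\Bigl[ x (L_n^i)'' + (i+1-x)(L_n^i)' + n L_n^i \Bigr].
$$

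The bracket on the right is precisely the left side of the differential equation (\ref{difeq}) for $L_n^{\alpha}$ with the integer parameter $\alpha = i$, hence it vanishes identically; since this holds for each $i$, the whole sum is zero and the first equation follows. For the second equation I would use the symmetric expansion $L_{n,m}(x,y) = \sum_{s=0}^n d_s\, L_m^s(y)\, x^s$ with $d_s = \frac{(-1)^s}{s!}\binom{n+m}{n-s}$ from the same theorem and repeat the computation with the roles of $x$ and $y$ interchanged, this time recovering (\ref{difeq}) with parameter $\alpha = s$ and with $m$ in place of $n$.

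The computation is routine once the basis is chosen; the one point that deserves attention is the role of the mixed second derivative. The term $y L_{xy}$ contributes exactly $i (L_n^i)'(x)$ to the coefficient of $y^i$, and it is this contribution that shifts the first-order coefficient from $(1-x)$ to $(i+1-x)$, thereby matching the differential equation for the parameter $\alpha = i$ rather than $\alpha = 0$. Recognizing that the coupled system decouples in this basis into a family of single-variable Laguerre equations with shifted parameters is the conceptual heart of the argument; everything else is bookkeeping.
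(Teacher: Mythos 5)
Your proposal is correct and follows exactly the route the paper intends: the paper's proof is the one-line remark that the lemma is ``a straightforward computation using our first two explicit formulas in (\ref{LagXY}) and the differential equation (\ref{difeq}),'' and your argument is precisely that computation carried out in detail, with the mixed-derivative term correctly supplying the parameter shift from $\alpha=0$ to $\alpha=i$ (respectively $\alpha=s$) in each row.
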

\begin{proof}
The proof is a straightforward computation using our first two explicit formulas in (\ref{LagXY}) and the differential equation (\ref{difeq}).
\end{proof}

\section{Another proof of the congruence of Carlitz}

In this technical section we prove several auxiliary results. Note that $(x)_n=x\cdot(x+1)\cdot\ldots\cdot(x+n-1)$ denotes the 
Pochhammer symbol, and $(p,q)$ stands for the greatest common divisor of $p$ and $q$.

\begin{lem}
For all $n,m\in\natu_0$, $t\in\{0,\ldots,n\}$, $l\in\{0,\ldots,m\}$, and non-zero $p,q\in\inte$ the following congruence holds.  
\begin{equation}
\label{cong1}
(l+1-p)_{m-l}\cdot (l+1+t+q)_{n-t}\equiv (l+1)_{m-l}\cdot (l+1+t)_{n-t}\pmod{(p,q)}
\end{equation} 

\end{lem}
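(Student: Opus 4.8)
The plan is to reduce everything to the single observation that $d:=(p,q)$ divides both $p$ and $q$, so that $p\equiv 0$ and $q\equiv 0\pmod d$. Rather than attacking the product on the left directly, I would split the claimed congruence into two independent factor congruences, one for each Pochhammer symbol, prove each by a termwise reduction, and then recombine them using the fact that congruences are compatible with multiplication.

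First I would expand the left Pochhammer factor straight from its definition,
$$(l+1-p)_{m-l}=\prod_{i=1}^{m-l}(l-p+i).$$
Since $d\mid p$, each factor satisfies $l-p+i\equiv l+i\pmod d$, and multiplying the reductions of the $m-l$ factors gives the reduction of the product, so
$$(l+1-p)_{m-l}\equiv\prod_{i=1}^{m-l}(l+i)=(l+1)_{m-l}\pmod d.$$

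Next, in exactly the same manner, I would write
$$(l+1+t+q)_{n-t}=\prod_{i=1}^{n-t}(l+t+q+i),$$
and use $d\mid q$ to get $l+t+q+i\equiv l+t+i\pmod d$ for each $i$, whence
$$(l+1+t+q)_{n-t}\equiv\prod_{i=1}^{n-t}(l+t+i)=(l+1+t)_{n-t}\pmod d.$$
Finally I would multiply the two displayed congruences: since the product of quantities that are congruent modulo $d$ is again congruent modulo $d$, this yields precisely (\ref{cong1}).

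I do not expect a genuine obstacle here; the essential content is only that $(p,q)$ annihilates both perturbations $p$ and $q$ simultaneously. The one point requiring care is the bookkeeping of the Pochhammer shifts, so that the index ranges in the two product expansions line up correctly with the unperturbed products on the right-hand side. I would also note that the hypotheses $t\le n$ and $l\le m$ ensure each product has a non-negative number of factors, with the empty product equal to $1$ in the boundary cases $t=n$ or $l=m$, so the argument remains valid there as well.
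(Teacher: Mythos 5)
Your proof is correct, but it is not the route the paper takes for its official proof: the authors prove the lemma by induction on $n$, with base case $n=0$ (where the second Pochhammer factor disappears and only the mod-$p$ reduction of $(l+1-p)_{m-l}$ is needed), and an inductive step that peels off the last factor $(l+1+k+q)$ of $(l+1+t+q)_{k+1-t}$, reduces it mod $q$, and invokes the induction hypothesis on the remaining product. Your argument instead splits the claim into two independent factorwise congruences --- $(l+1-p)_{m-l}\equiv(l+1)_{m-l}\pmod{(p,q)}$ using $(p,q)\mid p$, and $(l+1+t+q)_{n-t}\equiv(l+1+t)_{n-t}\pmod{(p,q)}$ using $(p,q)\mid q$ --- and multiplies them. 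This is exactly the alternative the authors themselves sketch in the remark immediately following their proof (``Another way to prove it would be to notice \ldots and multiply two congruences modulo $p$ and modulo $q$ together''), and your write-up actually makes that remark precise on the one point it glosses over: each congruence must first be weakened from modulus $p$ (resp.\ $q$) to the common modulus $(p,q)$ before the two can legitimately be multiplied. What your approach buys is brevity and transparency --- the induction is doing no real work here, since the two Pochhammer factors do not interact. What the paper's induction buys is mainly stylistic consistency with the inductive arguments used elsewhere (e.g.\ Lemma 1); it is not logically necessary. Your handling of the boundary cases $t=n$ and $l=m$ via the empty product is also correct and matches the convention $(x)_0=1$ used in the paper.
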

\begin{proof}
We use induction on $n$. If $n=0$, then $t=0$, $(x)_0=1$ and the identity has the form 
$$
(l+1-p)_{m-l}\equiv(l+1)_{m-l}\pmod{(p,q)}
$$ 
But $l+k-p\equiv l+k \pmod{p}$, which implies that 
$$
(l+1-p)\cdot (l+2-p)\cdot \ldots\cdot (l+(m-l)-p) \equiv (l+1)\cdot (l+2)\cdot\ldots\cdot (l+(m-l)) \pmod{(p,q)}
$$
and proves the base of induction. 

Assume now that (\ref{cong1}) holds for $\forall n \in  \{0,\ldots ,k\}$ and let us prove it for $n=k+1$. If in this case 
$t=n=k+1$ then (\ref{cong1}) again has the form $(l+1-p)_{m-l}\equiv(l+1)_{m-l}\pmod{(p,q)}$ so we will assume 
till end of the proof that $t \in \{0,\ldots,k\}$. Then we can write the L.H.S of (\ref{cong1}) as  
$$
(l+1-p)_{m-l}\cdot (l+1+t+q)_{k+1-t}=(l+1-p)_{m-l}\cdot (l+1+t+q)_{k-t}\cdot(l+1+k+q),
$$
which is congruent modulo $q$ to $(l+1-p)_{m-l}\cdot(l+1+t+q)_{k-t}\cdot(l+1+k)$. Since by the induction hypothesis 
$$
(l+1-p)_{m-l}\cdot (l+1+t+q)_{k-t} \equiv (l+1)_{m-l} \cdot (l+1+t)_{k-t} \pmod{(p,q)}
$$
we deduce that 
$$
(l+1-p)_{m-l}\cdot (l+1+t+q)_{k+1-t}\equiv (l+1)_{m-l}\cdot (l+1+t)_{k-t}\cdot (l+1+k)\pmod{(p,q)},
$$ 
and since $(l+1+t)_{k-t}\cdot (l+1+k)=(l+1+t)_{k+1-t}$ the lemma is proved.
\end{proof}

Another way to prove it would be to notice, as we did in the base of induction, that $(l+1+t+q)_{n-t}\equiv (l+1+t)\pmod{q}$ 
and multiply two congruences modulo $p$ and modulo $q$ together to obtain the congruence modulo $(p,q)$. 
Our next statement gives a congruence relation for the 
generalized Laguerre polynomials $L_n^{\alpha}(x)$, which we will use later, and which is interesting in its own right.

\begin{prop}
For all $n,m\in \natu_0$, $q\in\natu$, $i \in \{q,\ldots, m+q\}$, and non-zero $p \in \inte$ we have
\begin{equation}
\label{cong2} 
(i-q+1-p)_{m-(i-q)}\cdot n!\cdot L_n^i(x) \equiv (i-q+1)_{m-(i-q)}\cdot n!\cdot L_n^{i-q}(x)\pmod{(p,q)}
\end{equation}
\end{prop}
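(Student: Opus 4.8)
The plan is to reduce the polynomial congruence (\ref{cong2}) to a coefficient-wise comparison and then to invoke the previous lemma. First I would set $l:=i-q$, so that the hypothesis $i\in\{q,\ldots,m+q\}$ becomes $l\in\{0,\ldots,m\}$ and the assertion takes the more symmetric form
$$
(l+1-p)_{m-l}\cdot n!\cdot L_n^{l+q}(x)\equiv (l+1)_{m-l}\cdot n!\cdot L_n^{l}(x)\pmod{(p,q)}.
$$
Both sides are genuine polynomials in $x$ with integer coefficients, so it is enough to verify that the coefficient of each power $x^j$ on the left is congruent modulo $(p,q)$ to the corresponding coefficient on the right.

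To extract these coefficients I would rewrite the binomial in (\ref{LaguerreX}) as $\binom{n+\alpha}{n-j}=(\alpha+j+1)_{n-j}/(n-j)!$, which recasts (\ref{LaguerreX}) as
$$
n!\cdot L_n^{\alpha}(x)=\sum_{j=0}^n(-1)^j\binom{n}{j}(\alpha+j+1)_{n-j}\,x^j,
$$
so that the coefficient of $x^j$ is the integer $(-1)^j\binom{n}{j}(\alpha+j+1)_{n-j}$. Specializing $\alpha=l+q$ on the left and $\alpha=l$ on the right, and using that a congruence is preserved upon multiplication by the common integer $(-1)^j\binom{n}{j}$, it suffices to prove, for every $j\in\{0,\ldots,n\}$, that
$$
(l+1-p)_{m-l}\cdot (l+q+j+1)_{n-j}\equiv (l+1)_{m-l}\cdot (l+j+1)_{n-j}\pmod{(p,q)}.
$$

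Finally, I would note that this last congruence is exactly Lemma 3, that is congruence (\ref{cong1}), specialized to $t:=j$: there the factor $(l+1+t+q)_{n-t}$ equals $(l+q+j+1)_{n-j}$ and $(l+1+t)_{n-t}$ equals $(l+j+1)_{n-j}$, and the value $t=j$ lies in the admissible range $\{0,\ldots,n\}$. Reassembling the congruent coefficients against the powers $x^j$ then yields (\ref{cong2}).

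I do not anticipate a real obstacle, since the arithmetic heart of the matter is already supplied by Lemma 3. The only steps demanding some care are the algebraic rewriting of the binomial coefficient as a Pochhammer symbol and the (standard) observation that a polynomial congruence modulo an integer may be checked one coefficient at a time.
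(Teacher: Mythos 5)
Your proof is correct and follows essentially the same route as the paper's: both reduce the statement to a coefficient-by-coefficient comparison, rewrite $\frac{n!}{j!}\binom{n+\alpha}{n-j}$ as $\binom{n}{j}(\alpha+j+1)_{n-j}$, and then invoke Lemma 3 with $l=i-q$ and $t=j$. The only cosmetic difference is that the paper treats $n=0$ as a separate (trivial) case, which your uniform argument absorbs.
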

\begin{proof}
If $n=0$, then $L_n^{i}(x)=L_n^{i-q}(x)=1$. If we define $l:=i-q$ then the identity we just saw above 
$(l+1-p)_{m-l} \equiv (l+1)_{m-l}\pmod{(p,q)}$, proves the statement in this case. If $n\geq 1$, to prove the 
proposition we compare the corresponding coefficients on the L.H.S and on the R.H.S of (\ref{cong2}). Since 
$$
L_n^{\alpha}(x)=\sum_{j=0}^{n}\frac{(-1)^j}{j!}\binom{n+\alpha}{n-j}\cdot x^j
$$
we see that the coefficients of $x^t$, $\forall t \in \{0,\ldots,n\}$ on the L.H.S. and on the R.H.S. will be respectively 
$$
(i-q+1-p)_{m-(i-q)}\cdot (-1)^{t}\cdot \frac{n!}{t!}\cdot \binom{n+i}{n-t}~~~~\mbox{and}~~~~
(i-q+1)_{m-(i-q)}\cdot (-1)^{t}\cdot \frac{n!}{t!}\cdot\binom{n+i-q}{n-t}.
$$

\noindent Since 
$$
\frac{n!}{t!}\cdot \binom{n+i}{n-t}=\binom{n}{t}\cdot (t+i+1)_{n-t} ~~~~\mbox{and}~~~~
\frac{n!}{t!}\cdot \binom{n+i-q}{n-t} = \binom{n}{t}\cdot (t+i-q+1)_{n-t},
$$
we can use the congruence, which follows from lemma 3 with $l=i-q$,
$$
(i+1-p-q)_{m-(i-q)}\cdot (i+1+t)_{n-t} \equiv (i+1-q)_{m-(i-q)}\cdot (i+1+t-q)_{n-t}\pmod{(p,q)},
$$
to obtain that 
$$
(i-q+1-p)_{m-(i-q)}\cdot (-1)^{t}\cdot \frac{n!}{t!}\cdot \binom{n+i}{n-t} = (-1)^t\cdot\binom{n}{t}\cdot(t+i+1)_{n-t}\cdot (i+1-p-q)_{m-(i-q)}
$$
is congruent modulo the $\GCD(p,q)$ to
$$
(-1)^t\cdot \binom{n}{t}\cdot (i+1+t-q)_{n-t}\cdot (i+1-q)_{m-(i-q)} = (-1)^t\cdot (i+1-q)_{m-(i-q)}\cdot \frac{n!}{t!}\cdot \binom{n+i-q}{n-t}.
$$
It shows that the coefficients of $x^t$ on the L.H.S. and on the R.H.S. of (\ref{cong2}) coincide modulo the $(p,q)$ and proves 
the proposition.
\end{proof}

\noindent Here is a cute congruence, which follows trivially from this proposition by taking $p=q$ and $i=m+q$.

\begin{cor}
For all $n,m \in\natu_0$ and $q\in \natu$, 
$$
n!L_n^{m+q}(x) \equiv n!L_n^{m} (x) \pmod{q}
$$
\end{cor}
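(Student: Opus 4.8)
The plan is to obtain this corollary purely by specializing the two free parameters $p$ and $i$ in Proposition 1. First I would set $p=q$ and $i=m+q$ in the congruence (\ref{cong2}). With $p=q$ the modulus becomes $(p,q)=\GCD(q,q)=q$, so the resulting statement is automatically a congruence modulo $q$, matching the claim. I would also verify that $i=m+q$ is an admissible index, since the proposition requires $i\in\{q,\ldots,m+q\}$ and $m+q$ is exactly the upper endpoint of that range.

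The decisive simplification is that the two Pochhammer weights degenerate at this boundary value. Indeed, with $i=m+q$ we have $i-q=m$, so the common subscript is $m-(i-q)=m-m=0$; and since $(x)_0=1$ for every $x$ by definition of the Pochhammer symbol, both factors $(i-q+1-p)_{m-(i-q)}$ and $(i-q+1)_{m-(i-q)}$ collapse to the empty product $1$. Substituting these values into (\ref{cong2}) leaves
$$
n!\,L_n^{m+q}(x)\equiv n!\,L_n^{m}(x)\pmod{q},
$$
which is precisely the assertion of the corollary. I do not expect any real obstacle: all the substantive work is already carried out in Proposition 1, and the corollary is recovered simply by evaluating that statement at the extremal index $i=m+q$, where the auxiliary Pochhammer factors reduce to $1$ and $(p,q)$ reduces to $q$.
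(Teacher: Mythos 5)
Your proposal is correct and is exactly the paper's own argument: the authors state that the corollary "follows trivially from this proposition by taking $p=q$ and $i=m+q$," and your verification that both Pochhammer factors become $(x)_0=1$ and that the modulus $(p,q)$ reduces to $q$ fills in the same routine details. Nothing further is needed.
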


The proposition will be used later in the proof of our main result. Let us continue with a few more congruences 
before proving the identity (\ref{Carlitz}) in a direct way (cf. with Theorem 3 of \cite{Carlitz}).  In the next section this identity 
together with the proof will be generalized to the case of Laguerre polynomials $L_{n,m}(x,y)$.



\begin{lem}
For all $m,n \in \natu_0$, $q\in\natu$, $p \in \inte\setminus\{0\}$, and $i \in \{q,\ldots,m+q\}$ we have
\begin{equation}
\label{Formula6}
\binom{m}{m+q-i}\cdot (n+p+i-q+1)_{m-(i-q)}\equiv \binom{m+n}{m+q-i}\cdot (i-q-p+1)_{m-(i-q)}\pmod{(p,q)}
\end{equation}
\end{lem}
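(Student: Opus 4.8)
The plan is to strip the statement down to its essential content by the substitution $l:=i-q$. Under this substitution the hypothesis $i\in\{q,\ldots,m+q\}$ becomes $l\in\{0,\ldots,m\}$, the common Pochhammer length $m-(i-q)$ becomes $m-l$, and the binomial lower index $m+q-i$ becomes $m-l$, so that (\ref{Formula6}) reads
$$
\binom{m}{m-l}\cdot (n+p+l+1)_{m-l}\equiv \binom{m+n}{m-l}\cdot (l-p+1)_{m-l}\pmod{(p,q)}.
$$
The whole point of rewriting it this way is that the two sides now differ only through the presence of $p$ inside the Pochhammer symbols, which is exactly what the modulus will kill.

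Next I would exploit that $(p,q)$ divides $p$, so $p\equiv 0\pmod{(p,q)}$. Each of the $m-l$ factors of $(n+p+l+1)_{m-l}$ is therefore congruent modulo $(p,q)$ to the corresponding factor with the $p$ deleted, and the same holds for $(l-p+1)_{m-l}$; multiplying these factorwise congruences together gives
$$
(n+p+l+1)_{m-l}\equiv (n+l+1)_{m-l}\quad\text{and}\quad (l-p+1)_{m-l}\equiv (l+1)_{m-l}\pmod{(p,q)}.
$$
Since the binomial coefficients carry no dependence on $p$, the claimed congruence reduces to the entirely $p$-free statement obtained by deleting every occurrence of $p$.

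It then remains to verify the exact identity (now with no modulus at all)
$$
\binom{m}{m-l}\cdot (n+l+1)_{m-l}=\binom{m+n}{m-l}\cdot (l+1)_{m-l}.
$$
This I would establish by rewriting each factor in terms of factorials, using $(n+l+1)_{m-l}=\frac{(n+m)!}{(n+l)!}$ and $(l+1)_{m-l}=\frac{m!}{l!}$ together with $\binom{m}{m-l}=\frac{m!}{l!\,(m-l)!}$ and $\binom{m+n}{m-l}=\frac{(m+n)!}{(m-l)!\,(n+l)!}$; both sides then collapse to $\frac{m!\,(n+m)!}{l!\,(m-l)!\,(n+l)!}$, which proves the identity.

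I do not expect any genuine obstacle here. The one substantive observation is that reduction modulo $(p,q)$ annihilates all the $p$-dependence, after which the congruence is not really a congruence but a clean factorial identity; the remaining work is routine bookkeeping. The only point requiring mild care is to confirm that the Pochhammer symbols on both sides have the same length $m-l$, so that the factorial cancellations line up exactly.
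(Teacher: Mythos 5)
Your proof is correct, but it takes a genuinely shorter route than the paper's. Both arguments begin the same way, by using $\GCD(p,q)\mid p$ to delete $p$ factorwise from the two Pochhammer symbols; the difference is what happens next. The paper additionally shifts the left-hand Pochhammer symbol by $q$ (replacing $(n+i-q+1)_{m-(i-q)}$ by $(n+i+1)_{m-(i-q)}$ modulo $q$, and likewise on the right), which leaves a statement that is only a congruence modulo $q$ and requires two further reduction steps (its displays (\ref{Formula7}) and (\ref{Formula8})) before bottoming out in a factorial identity. You instead observe that after removing $p$ alone, the statement in the variable $l=i-q$ is already an \emph{exact} integer identity,
$$
\binom{m}{m-l}\cdot (n+l+1)_{m-l}=\binom{m+n}{m-l}\cdot (l+1)_{m-l}=\frac{m!\,(n+m)!}{l!\,(m-l)!\,(n+l)!},
$$
so that $q$ plays no role beyond sitting inside the modulus. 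I checked the substitution ($i\in\{q,\ldots,m+q\}$ does give $l\in\{0,\ldots,m\}$, and both the binomial lower index and the Pochhammer length become $m-l$), the factorwise reduction of the Pochhammer symbols, and the factorial computation; all are sound, including the degenerate case $l=m$ where both sides equal $1$. What your approach buys is the conceptual point that the lemma is a congruence only because of the $p$-perturbation of a clean Vandermonde-type identity; what the paper's longer chain buys is nothing extra here, so your version is preferable.
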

\begin{proof}
Using $a+b\equiv a\pmod{b},~\forall a\in\inte$ and $\forall b\in\natu$ deduce that 
$$
(n+p+i-q+1)_{m-(i-q)} = (n+p+i-q+1)\cdot\ldots\cdot (n+p+m)\equiv (n+i-q+1)\cdot\ldots\cdot (n+m) \pmod{p},
$$
$$
\mbox{and}~~(n+i-q+1)\cdot\ldots\cdot (n+m)\equiv (n+i+1)\cdot\ldots\cdot (n+m+q) = (n+i+1)_{m-(i-q)} \pmod{q},
$$
which imply that the L.H.S. of (\ref{Formula6}) 
$$
\binom{m}{m+q-i}\cdot (n+p+i-q+1)_{m-(i-q)}\equiv \binom{m}{m+q-i}\cdot (n+i+1)_{m-(i-q)} \pmod{(p,q)}.
$$
Similarly we have for the R.H.S. of (\ref{Formula6}) 
$$
\binom{m+n}{m+q-i}\cdot (i-q-p+1)_{m-(i-q)} \equiv \binom{m+n}{m+q-i}\cdot (i+1)_{m-(i-q)}\pmod{(p,q)},
$$
and therefore (\ref{Formula6}) will follow from 
\begin{equation}
\label{Formula7}
\binom{m}{m+q-i}\cdot (n+i+1)_{m-(i-q)} \equiv \binom{m+n}{m+q-i}\cdot (i+1)_{m-(i-q)}\pmod{(p,q)}.
\end{equation}
Furthermore, 
$$
\binom{m}{m+q-i}\cdot (n+i+1)_{m-(i-q)} = \frac{m!\cdot (n+m+q)!}{(i-q)!\cdot(m+q-i)!\cdot (n+i)!} = 
\binom{n+m+q}{m+q-i}\cdot\frac{m!}{(i-q)!}
$$
and
$$
\binom{m+n}{m+q-i}\cdot (i+1)_{m-(i-q)} = \frac{(m+n)!\cdot (m+q)!}{(n+i-q)!\cdot(m+q-i)!\cdot i!} = 
(n+i-q+1)_{m-(i-q)}\cdot\binom{m+q}{m+q-i},
$$
with $(n+i-q+1)_{m-(i-q)}\equiv (n+i+1)_{m-(i-q)}\pmod{q}$ imply that 
$$
\binom{m+n}{m+q-i}\cdot (i+1)_{m-(i-q)} \equiv \binom{m+q}{m+q-i}\cdot (n+i+1)_{m-(i-q)} 
= \binom{m+n+q}{m+q-i}\cdot \frac{(m+q)!}{i!} \pmod{q}.
$$
Hence (\ref{Formula7}) will follow from 
\begin{equation}
\label{Formula8}
\binom{m+n+q}{m+q-i}\cdot \frac{m!}{(i-q)!} \equiv \binom{m+n+q}{m+q-i}\cdot \frac{(m+q)!}{i!}\pmod{q}.
\end{equation}
Since  $m!/(i-q)! = (i-q+1)\cdot\ldots\cdot m \equiv (i+1)\cdot\ldots\cdot (m+q) = (m+q)!/i!\pmod{q}$ this 
congruence (\ref{Formula8}) is true and thus our formula (\ref{Formula6}) is proven.  
\end{proof}

\begin{cor}
For all $m,s\in\natu_0$, $q\in\natu$, and $i\in\{q,\ldots,m+q\}$ we have
\begin{equation}
\label{Formula9}
(m+q-i)!\cdot\binom{m}{m+q-i}\cdot\binom{m+s}{m+q-i}\equiv 
(m+q-i)!\cdot\binom{m+q}{m+q-i}\cdot\binom{m+s+q}{m+q-i} \pmod{q}.
\end{equation}
\end{cor}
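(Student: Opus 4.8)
The plan is to reduce the stated congruence to a single, purely multiplicative fact about products of consecutive integers and then apply it twice. First I would introduce the abbreviation $k:=m+q-i$, so that the hypothesis $i\in\{q,\ldots,m+q\}$ becomes $k\in\{0,\ldots,m\}$ and every factorial below is well defined. The point of this substitution is that $m-(i-q)=k$, and that the quantities appearing in (\ref{Formula9}) are nothing but falling factorials in disguise:
$$
(m+q-i)!\binom{m}{m+q-i}=k!\binom{m}{k}=\frac{m!}{(m-k)!},\qquad k!\binom{m+q}{k}=\frac{(m+q)!}{(m+q-k)!},
$$
and likewise with $m$ replaced by $m+s$. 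Rewriting (\ref{Formula9}) in the form
$$
k!\binom{m}{k}\cdot\binom{m+s}{k}\equiv k!\binom{m+q}{k}\cdot\binom{m+s+q}{k}\pmod q
$$
makes the symmetry of the two sides transparent and isolates exactly what has to move.

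The engine of the proof is the elementary congruence already used at the end of the proof of Lemma 5: for any integer $a\ge k\ge 0$,
$$
k!\binom{a+q}{k}=(a+q-k+1)\cdots(a+q)\equiv (a-k+1)\cdots a=k!\binom{a}{k}\pmod q,
$$
since each of the $k$ consecutive factors on the left exceeds the corresponding factor on the right by exactly $q$. I would record this once for $a=m$ (call it $(A)$) and once for $a=m+s$ (call it $(B)$).

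With $(A)$ and $(B)$ in hand the corollary follows by a short substitution chain on the right-hand side. Multiplying $(A)$ by the integer $\binom{m+s+q}{k}$ gives
$$
k!\binom{m+q}{k}\binom{m+s+q}{k}\equiv k!\binom{m}{k}\binom{m+s+q}{k}\pmod q;
$$
regrouping the right-hand term as $\binom{m}{k}\cdot k!\binom{m+s+q}{k}$ and multiplying $(B)$ by the integer $\binom{m}{k}$ turns it into $\binom{m}{k}\cdot k!\binom{m+s}{k}=k!\binom{m}{k}\binom{m+s}{k}$, which is precisely the left-hand side of (\ref{Formula9}). Translating back through $k=m+q-i$ completes the argument.

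I do not expect a genuine obstacle here; the one place that demands care is bookkeeping rather than ideas: one must keep each factor $k!$ glued to its binomial coefficient and never cancel it. Indeed $\binom{a+q}{k}\equiv\binom{a}{k}\pmod q$ is false in general, because $\GCD(k!,q)$ may exceed $1$; the congruences $(A)$ and $(B)$ only become valid after multiplication by $k!$, when both sides are honest products of $k$ consecutive integers. Respecting this grouping at every step is what makes the two substitutions legitimate.
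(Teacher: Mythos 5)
Your argument is correct. You reduce the claim to the single elementary congruence $k!\binom{a+q}{k}\equiv k!\binom{a}{k}\pmod q$ (two products of $k$ consecutive integers whose factors differ termwise by $q$), apply it once with $a=m$ and once with $a=m+s$, and chain the two after multiplying each by an appropriate integer; every step is legitimate, and you are right to insist on keeping $k!$ attached to each binomial coefficient, since $\binom{a+q}{k}\equiv\binom{a}{k}\pmod q$ can fail when $\GCD(k!,q)>1$ (the paper makes the same point with the example $m=3$, $q=6$, $s=5$, $i=6$). The route is genuinely different from the paper's, though: the paper obtains the corollary in one stroke by specializing Lemma 5 to $p=-q$ and $n=s+q$ and recognizing the two Pochhammer symbols $(n+p+i-q+1)_{m+q-i}$ and $(i-q-p+1)_{m+q-i}$ as $(m+q-i)!\binom{m+s}{m+q-i}$ and $(m+q-i)!\binom{m+q}{m+q-i}$. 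What the paper's approach buys is economy within its own architecture, since Lemma 5 is already proved and is needed again independently in the main theorem; what your approach buys is self-containment and transparency: it avoids the detour through the parameter $p$ entirely, uses only the shift-by-$q$ mechanism (which is, in essence, the congruence (\ref{Formula8}) buried at the end of the proof of Lemma 5), and makes it immediately visible why the modulus here is $q$ alone rather than $(p,q)$.
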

\begin{proof}
Using notations from lemma 5 and assuming that $p=-q$ and $n=s+q$ with $s\in\natu_0$  we obtain
$$
(n+p+i-q+1)_{m+q-i} = (m+q-i)!\cdot\binom{m+s}{m+q-i}~\mbox{and}~(i-q-p+1)_{m+q-i} = (m+q-i)!\cdot\binom{m+q}{m+q-i}.
$$
Hence we can rewrite lemma 5 as
$$
(m+q-i)!\cdot \binom{m}{m+q-i}\cdot \binom{m+s}{m+q-i} \equiv (m+q-i)!\cdot \binom{m+q}{m+q-i}\cdot 
\binom{m+s+q}{m+q-i} \pmod{q},
$$
which is what we had to show.
\end{proof}
Please notice that, for example if $m=3,~q=6,~s=5$, and $i=6$, then for the binomial factors from (\ref{Formula9}) we have
$$
\binom{3}{3}\cdot\binom{3+5}{3} -\binom{3+6}{3}\cdot\binom{3+6+5}{3} \equiv 2\pmod{6},
$$
so the factor $(m+q-i)!$ in (\ref{Formula9}) is necessary to guarantee the equality. 
Now we are ready to give a direct proof of Carlitz' identity for the classical Laguerre polynomials.

\begin{cor}(see \cite{Carlitz})
For all $n,i\in\natu_0$ and $p\in\natu$ the following congruence holds.
$$
(n+p)!L_{n+p}^i(x) \equiv n!L_n^i(x)\cdot p!L_p^i(x) \pmod{p}
$$
\end{cor}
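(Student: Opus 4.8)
The plan is to compare the two sides coefficient by coefficient and to reduce the whole statement to the congruence (\ref{Formula9}) established in the previous corollary. Writing $N!\,L_N^{i}(x)=\sum_{t=0}^{N}(-1)^{t}\,\frac{N!}{t!}\binom{N+i}{N-t}x^{t}$ from (\ref{LaguerreX}), I would first observe that the factor $p!\,L_p^{i}(x)$ collapses modulo $p$. For $0\le t\le p-1$ the integer $\frac{p!}{t!}=(t+1)(t+2)\cdots p$ is a product of consecutive integers containing the factor $p$, so every coefficient with $t<p$ vanishes $\pmod p$, leaving $p!\,L_p^{i}(x)\equiv(-1)^{p}x^{p}\pmod p$. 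Because $n!\,L_n^{i}(x)$ has integer coefficients, this yields
$$
n!\,L_n^{i}(x)\cdot p!\,L_p^{i}(x)\equiv(-1)^{p}\,x^{p}\cdot n!\,L_n^{i}(x)\pmod p,
$$
so the right-hand side only carries powers $x^{r}$ with $p\le r\le n+p$.

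Next I would match this against $(n+p)!\,L_{n+p}^{i}(x)$ power by power. For $r<p$ the coefficient of $x^{r}$ on the left, namely $(-1)^{r}\frac{(n+p)!}{r!}\binom{n+p+i}{n+p-r}$, vanishes modulo $p$ for the same reason as before: $\frac{(n+p)!}{r!}=(r+1)\cdots(n+p)$ runs over consecutive integers and, since $r<p\le n+p$, includes the factor $p$. This agrees with the reduced right-hand side, which has no $x^{r}$ term for $r<p$. Writing $r=s+p$ with $0\le s\le n$ for the remaining powers and cancelling the common sign $(-1)^{r}$, the congruence of coefficients that remains to be checked is
$$
\frac{(n+p)!}{(s+p)!}\binom{n+p+i}{n-s}\equiv\frac{n!}{s!}\binom{n+i}{n-s}\pmod p .
$$

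Finally I would deduce this last congruence from (\ref{Formula9}). Using the elementary identities $(n-s)!\binom{n}{n-s}=\frac{n!}{s!}$ and $(n-s)!\binom{n+p}{n-s}=\frac{(n+p)!}{(s+p)!}$, both sides acquire the shape $(n-s)!\binom{\,\cdot\,}{n-s}\binom{\,\cdot\,}{n-s}$, and the required statement is precisely (\ref{Formula9}) under the substitution $m=n$, $k=n-s$ (that is, the running index $m+q-i$ there equal to $n-s$), free parameter equal to $i$, and modulus $q=p$; the admissibility condition $s+p\in\{p,\ldots,n+p\}$ holds since $0\le s\le n$. The point I expect to be the main obstacle, and the reason (\ref{Formula9}) with its leading factor $(n-s)!$ is indispensable, is that one cannot split this into a Pochhammer congruence $(s+p+i+1)_{n-s}\equiv(s+i+1)_{n-s}$ times a binomial congruence $\binom{n+p}{s+p}\equiv\binom{n}{s}$: the latter is simply false for composite $p$ (for instance $\binom{7}{5}=21\equiv 1$ while $\binom{3}{1}=3\pmod 4$), so the factor $(n-s)!$ need not be invertible and the discrepancy cannot be separated out. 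It is exactly the combined integer identity of the previous corollary that carries the $(n-s)!$ absorbing this failure, and invoking it closes the proof.
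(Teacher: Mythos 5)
Your proposal is correct and follows essentially the same route as the paper: reduce $p!\,L_p^{i}(x)$ to $(-1)^{p}x^{p}$ modulo $p$, kill the coefficients of $x^{r}$ with $r<p$ on the left by the divisibility of $\frac{(n+p)!}{r!}$, and match the remaining coefficients via the combined congruence (\ref{Formula9}) with $m=n$, $q=p$, the running index equal to $n-s$, and the free parameter equal to $i$. Your closing remark on why the factor $(n-s)!$ cannot be stripped off echoes the paper's own numerical example following Corollary~2 and is a worthwhile observation, but it does not change the argument.
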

\begin{proof}
First observe that $p!\cdot L_{p}^{i}(x)\equiv (-1)^{p} x^{p} \pmod{p}$ (cf. with (4.6) of \cite{Carlitz}). 
Indeed, $p!\cdot L_p^i(x) = $
$$
=\sum_{t=0}^p (-1)^t \cdot \frac{p!}{t!}\cdot \binom{p+i}{p-t}\cdot x^t = 
p\cdot \sum_{t=0}^{p-1}(-1)^t \cdot (t+1)_{p-1-t}\cdot \binom{p+i}{p-t}\cdot x^t + (-1)^{p} x^p \equiv (-1)^p x^p\pmod{p}.
$$
Therefore it's enough to show that
\begin{equation}
\label{Formula10}
(n+p)!L_{n+p}^i(x) \equiv  (-1)^p x^p \cdot n! L_n^i (x) \pmod{p}.
\end{equation}
We do it by comparing the coefficients of $x^t$ on both sides of (\ref{Formula10}). Suppose first that  $t\in \{0,\ldots, p-1\}$, then the 
coefficient on the R.H.S. of (\ref{Formula10}) is zero. The corresponding coefficient on the L.H.S. of (\ref{Formula10}) is 
$$
(-1)^{t}\cdot \frac{(n+p)!}{t!}\binom{n+p+i}{n+p-t} =(-1)^{t}\cdot \binom{n+p+i}{n+p-t}\cdot \frac{(p-1)!}{t!}\cdot p\cdot (p+1)\cdot \ldots \cdot (n+p) \equiv 0\pmod{p}.
$$
Assume now that $t\in\{p,\ldots,n+p\}$. Using (\ref{LaguerreX}) again we see that the coefficients of $x^t$ on the 
left and right hand sides of (\ref{Formula10}) are respectively 
\begin{equation}
\label{Formula11}
(-1)^{t}\cdot \frac{(n+p)!}{t!}\cdot \binom{n+p+i}{n+p-t}~~~~~\mbox{and}~~~~~
(-1)^{p}\cdot (-1)^{t-p}\cdot \frac{n!}{(t-p)!}\cdot \binom{n+i}{n+p-t}.
\end{equation}
Canceling $(-1)^t$ on both sides we can rewrite these coefficients as 
$$
(n+p-t)!\cdot \binom{n+p}{n+p-t}\cdot \binom{n+p+i}{n+p-t}~~~\mbox{and} ~~~
(n+p-t)!\cdot \binom{n}{n+p-t}\cdot \binom{n+i}{n+p-t}.
$$
Applying corollary 2 with $n=m,~p=q,~t=i$, and $i=s$ we deduce that these coefficients are congruent $\pmod{p}$. 
This finishes our proof of the identity (\ref{Carlitz}).
\end{proof}

\section{Main theorem}

\begin{thm}
For all $n,m\in\natu_0$ and $p,q\in\natu$ we have (cf. with (\ref{Carlitz}) above)
\begin{equation}
\label{Formula12}
 (n+p)!(m+q)! \cdot L_{n+p,m+q}(x,y)\equiv n!m! \cdot L_{n,m}(x,y)\cdot p!q! \cdot L_{p,q}(x,y) \pmod{(p,q)}.
\end{equation}
\end{thm}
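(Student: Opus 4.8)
The plan is to imitate, in two variables, the direct proof of Carlitz' identity carried out above for the one-variable polynomials. The first step is to isolate the two-variable analogue of the relation $p!\,L_p^i(x)\equiv(-1)^px^p\pmod p$, namely the congruence
$$
p!\,q!\,L_{p,q}(x,y)\equiv(-1)^{p+q}x^py^q\pmod{(p,q)}.
$$
To obtain it I would expand $p!\,q!\,L_{p,q}(x,y)=\sum_{i=0}^q(-1)^i\frac{q!}{i!}\binom{p+q}{q-i}\bigl(p!\,L_p^i(x)\bigr)y^i$ from the first formula in (\ref{LagXY}), replace each $p!\,L_p^i(x)$ by $(-1)^px^p$ (valid modulo $p$, hence modulo $(p,q)$), recognize the surviving sum as $q!\,L_q^p(y)$, and finally replace $q!\,L_q^p(y)$ by $(-1)^qy^q$ (modulo $q$). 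Since multiplying a congruence by an integer-coefficient polynomial preserves it, this reduces the theorem to
$$
(n+p)!(m+q)!\,L_{n+p,m+q}(x,y)\equiv(-1)^{p+q}x^py^q\,n!\,m!\,L_{n,m}(x,y)\pmod{(p,q)}.
$$

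I would prove this by comparing, via the first formula in (\ref{LagXY}), the coefficients of $y^I$ on the two sides. For $0\le I<q$ the right-hand coefficient is zero, while the left-hand coefficient carries the factor $\frac{(m+q)!}{I!}=(I+1)(I+2)\cdots(m+q)$, which is divisible by $q$ and hence by $(p,q)$, so that coefficient also vanishes modulo $(p,q)$. For $q\le I\le m+q$ the comparison, after cancelling a common sign, reduces to the polynomial congruence
$$
\frac{(m+q)!}{I!}\binom{m+q+n+p}{m+q-I}(n+p)!\,L_{n+p}^I(x)\equiv(-1)^px^p\frac{m!}{(I-q)!}\binom{m+n}{m-(I-q)}n!\,L_n^{I-q}(x)\pmod{(p,q)}.
$$
On the left I would apply the one-variable Carlitz identity together with $p!\,L_p^I(x)\equiv(-1)^px^p$ to write $(n+p)!\,L_{n+p}^I(x)\equiv(-1)^px^p\,n!\,L_n^I(x)$, cancel the common monomial $(-1)^px^p$, and then invoke Proposition~1 (the congruence~(\ref{cong2})) with $i=I$ to trade $n!\,L_n^I(x)$ for $n!\,L_n^{I-q}(x)$ at the cost of the Pochhammer factors $(J+1-p)_{m-J}$ and $(J+1)_{m-J}$, where $J:=I-q$.

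What remains is a purely numerical congruence between the scalar coefficients, and this is the step I expect to be the main obstacle: writing $B:=\binom{m+n}{m-J}$, it amounts to
$$
(J+q+1)_{m-J}\binom{m+q+n+p}{m-J}\equiv B\,(J+1-p)_{m-J}\pmod{(p,q)}.
$$
I would dispatch it by chaining the two technical results already established. Rewriting the left-hand side as $\binom{m+q}{m-J}\,(J+q+n+p+1)_{m-J}$ exhibits it as the right-hand side of Corollary~2 (the congruence~(\ref{Formula9})) taken with $i=I$ and $s=n+p$, which brings it modulo $q$, and therefore modulo $(p,q)$, to $\binom{m}{m-J}\,(n+p+J+1)_{m-J}$; Lemma~5 (the congruence~(\ref{Formula6})), again with $i=I$, then transports this last quantity to $B\,(J+1-p)_{m-J}$ modulo $(p,q)$, which is the desired right-hand side. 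Multiplying the resulting scalar congruence back by the integer-coefficient polynomial $n!\,L_n^I(x)$ closes the $y^I$-coefficient, and letting $I$ range over $\{0,\ldots,m+q\}$ completes the proof. The one genuinely delicate point is the repeated, and legitimate, passage ``congruent modulo $q$ (or modulo $p$) implies congruent modulo $(p,q)$,'' used to glue the modulo-$q$ information from (\ref{Formula9}) to the modulo-$(p,q)$ information from (\ref{Formula6}); every cancellation along the way is of a common monomial or of an integer-coefficient factor, never a division.
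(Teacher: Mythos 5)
Your proposal is correct and follows essentially the same route as the paper's proof: the same reduction to $(n+p)!(m+q)!\,L_{n+p,m+q}\equiv(-1)^{p+q}x^py^q\,n!m!\,L_{n,m}\pmod{(p,q)}$, the same comparison of $y^i$-coefficients via the first formula of Theorem 1, and the same chain of ingredients (the one-variable Carlitz congruence, Proposition 1, Corollary 2 with $s=n+p$, and Lemma 5), merely applied in a slightly permuted order and with a marginally different derivation of $p!q!\,L_{p,q}\equiv(-1)^{p+q}x^py^q$.
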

\begin{proof}
We will compare the corresponding coefficients of $x^t\cdot y^i$ on both sides of the congruence.
Similarly to the one variable case we have $p!q!\cdot L_{p,q}(x,y)\equiv(-1)^{p+q} \cdot x^p y^q \pmod{(p,q)}$. 
Indeed, using our third formula in (\ref{LagXY}) we have 
$$
p!q!\cdot L_{p,q}(x,y)=\sum_{i=0}^{q}\sum_{t=0}^{p} \frac{(-1)^{i+t}\cdot p!q!}{i! t!}\cdot
\binom{p+q}{q-i}\binom{p+i}{p-t} \cdot x^t y^i,
$$
so if $i+t<p+q$ then $\GCD(p,q)~|~p!q!/i!t!$ since either $i<q$ or $t<p$. If $i=q$ and $t=p$, the coefficient of $x^p y^q$ 
equals $(-1)^{p+q}$ and hence
\begin{equation}
\label{Formula13}
n!m!\cdot L_{n,m}(x,y) \cdot p!q!\cdot L_{p,q}(x,y) \equiv (-1)^{p+q} \cdot x^p y^q\cdot n!m!\cdot L_{n,m}(x,y) \pmod{(p,q)}.
\end{equation}
If we take the coefficient of $x^t y^i$ in $(n+p)!(m+q)!\cdot L_{n+p,m+q}(x,y)$
with $t<p$ or $i<q$ we will have 
$$
(-1)^{i+t}\frac{(n+p)!(m+q)!}{t! i!}\cdot \binom{m+q+n+p}{m+q-i}\cdot \binom{n+p+i}{n+p-t},
$$
and if, for example, $t<p$ then the integer $\frac{(n+p)!}{t!}$ is divisible by $p$, and hence by $\GCD(p,q)$. 
Since $\frac{(m+q)!}{i!}$, $\binom{m+q+n+p}{m+q-i}$, and $\binom{n+p+i}{n+p-t}$ are all integers, $\GCD(p,q)$ divides 
the coefficient of $x^t y^i$. If $t\geq p$ but $i<q$ the proof is similar since $q~|~\frac{(m+q)!}{i!}$. 
So to prove theorem 2 it is enough to show that the coefficients of $x^t y^i$ on the L.H.S. and the R.H.S. of (\ref{Formula12}) 
are congruent $\pmod{(p,q)}$ for all $p\leq t\leq n+p$ and $q \leq i \leq m+q$.

Thus we assume from now on that $t \in \{p,\ldots ,n+p\}$ and $i \in \{q,\ldots,m+q\}$. According to the first formula from our 
theorem 1, the coefficient of $y^i$ on the L.H.S. of (\ref{Formula12}) is
$$
(-1)^i\cdot\frac{(n+p)!(m+q)!}{i!}\cdot \binom{m+q+n+p}{m+q-i}\cdot L_{n+p}^i (x),
$$
which is, due to the identity (\ref{Carlitz}), 
$$
\equiv (-1)^{i+p} \cdot\frac{(m+q)!}{i!}\cdot \binom{m+q+n+p}{m+q-i}\cdot n!\cdot L_n^i (x) \cdot x^p \pmod{p}.
$$
Now, 
$$
\frac{(m+q)!}{i!}\cdot \binom{m+q+n+p}{m+q-i} = (m+q-i)!\cdot\binom{m+q}{m+q-i}\cdot \binom{m+q+n+p}{m+q-i},
$$
which is according to corollary 2 for $s=n+p$ congruent $\pmod{q}$ to
$$
(m+q-i)!\cdot\binom{m}{m+q-i}\cdot \binom{m+n+p}{m+q-i} = \binom{m}{m+q-i}\cdot (n+p+i-q+1)_{m-(i-q)},
$$
which by lemma 5 is 
$$
\equiv \binom{m+n}{m+q-i}\cdot (i-q-p+1)_{m-(i-q)} \pmod{(p,q)}.
$$
It shows that the coefficient of $y^i$ on the L.H.S. of (\ref{Formula12}) is 
$$
\equiv (-1)^{i+p} \cdot \binom{m+n}{m+q-i}\cdot (i-q-p+1)_{m-(i-q)}\cdot n!\cdot L_n^i (x) \cdot x^p \pmod{(p,q)}.
$$
Furthermore, proposition 1 allows to replace $(i-q-p+1)_{m-(i-q)}\cdot n!\cdot L_n^i(x)$ by $(i-q+1)_{m-(i-q)}\cdot n!\cdot L_n^{i-q}(x)$ 
modulo $(p,q)$, so the coefficient of $y^i$ on the L.H.S. of (\ref{Formula12}) is
\begin{equation}
\label{Formula14}
\equiv (-1)^{i+p} \cdot \binom{m+n}{m+q-i}\cdot (i-q+1)_{m-(i-q)}\cdot n!\cdot L_n^{i-q}(x)\cdot x^p \pmod{(p,q)}.
\end{equation}
Now let $s:=t-p$, then according to (\ref{LaguerreX}), the coefficient of $x^t$ in (\ref{Formula14}) will be
$$
(-1)^{i+p} \cdot \binom{m+n}{m+q-i}\cdot (i-q+1)_{m-(i-q)}\cdot \frac{(-1)^s n!}{s!}\cdot \binom{n+i-q}{n+p-t} = 
$$
$$
 = (-1)^{i+t}\frac{n!}{(t-p)!}\cdot \frac{m!}{(i-q)!}\cdot \binom{m+n}{m+q-i}\cdot \binom{n+i-q}{n+p-t} .
$$
Since by theorem 1, the coefficient of $x^{t-p} y^{i-q}$ in $(-1)^{p+q}\cdot n!m!\cdot L_{n,m}(x,y)$ is
$$
(-1)^{p+q}\cdot\frac{(-1)^{t+i-p-q}n!m!}{(t-p)!(i-q)!}\cdot\binom{m+n}{m+q-i}\cdot\binom{n+i-q}{n+p-t},
$$
we see that the coefficients of $x^t y^i$ on the L.H.S. of (\ref{Formula12}) and on the R.H.S. of (\ref{Formula13}) are congruent 
$\pmod{(p,q)}$, and thus our theorem is proven. 
\end{proof}

\section{Irreducibility and other related questions}

First we observe that 
\begin{lem}
For all $n,m\in\natu_0$ the polynomials $L_{n,m}(x,y)$ are irreducible over the rationals. 
\end{lem}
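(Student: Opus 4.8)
The plan is to reduce everything to Schur's one-variable theorem by means of the diagonal specialization $y=x$. When $n=0$ or $m=0$ the polynomial is, by the Note following the Definition, simply $L_m(y)$ or $L_n(x)$; for $n+m\ge 1$ these are irreducible in $\rati[x,y]$ because a polynomial in one variable that is irreducible over $\rati$ (Schur, \cite{Schur}) stays irreducible in the larger ring $\rati[x,y]=\rati[x][y]$ (a factorization would have to keep $x$-degree $0$ in each factor). The degenerate case $n=m=0$ gives $L_{0,0}=1$ and is excluded. So I would assume $n,m\ge 1$ from now on. The engine of the argument is the identity $L_{n,m}(x,x)=\binom{n+m}{n}L_{n+m}(x)$ recorded in that same Note: since $L_{n+m}(x)$ is irreducible over $\rati$ by Schur, $L_{n,m}(x,x)$ is a nonzero rational multiple of an irreducible polynomial of degree exactly $n+m$ in $x$.

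Next I would pin down the behaviour of $L_{n,m}$ at the top. From the explicit formula (\ref{LagXY}) every monomial $x^s y^i$ occurring satisfies $0\le s\le n$ and $0\le i\le m$, so the total degree is $n+m$, and this degree is attained only by $x^n y^m$, whose coefficient $\tfrac{(-1)^{n+m}}{n!\,m!}$ is nonzero. Hence the top-degree homogeneous part of $L_{n,m}$ is the single monomial $\tfrac{(-1)^{n+m}}{n!\,m!}\,x^n y^m$. Because $\rati[x,y]$ is a graded integral domain, the top-degree part of a product of polynomials is the product of their top-degree parts, and degrees add.

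The irreducibility then follows by contradiction. Suppose $L_{n,m}=fg$ with $f,g\in\rati[x,y]$ both non-constant. Comparing top-degree forms gives (top form of $f$)(top form of $g$)$=\tfrac{(-1)^{n+m}}{n!\,m!}\,x^n y^m$. Since $\rati[x,y]$ is a UFD whose only homogeneous divisors of $x^n y^m$ are scalar multiples of monomials, the top form of $f$ must equal $c_1 x^a y^b$ and that of $g$ must equal $c_2 x^{\,n-a} y^{\,m-b}$. Specializing $y=x$, the leading term of $f(x,x)$ is $c_1 x^{a+b}$ with $a+b=\deg f\ge 1$, so $f(x,x)$ is non-constant, and likewise $g(x,x)$. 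But then $L_{n,m}(x,x)=f(x,x)\,g(x,x)$ would exhibit the irreducible polynomial $\binom{n+m}{n}L_{n+m}(x)$ as a product of two non-constant factors, a contradiction. Therefore $L_{n,m}$ admits no nontrivial factorization and is irreducible over $\rati$.

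The main obstacle is not any single hard computation but making the two structural inputs airtight: first, that the top-degree form really is the lone monomial $x^n y^m$, which is exactly what forces each factor's top form to be a monomial and thereby guarantees that the specialization $y=x$ cannot collapse the degree of $f(x,x)$ or $g(x,x)$; and second, that Schur's theorem is invoked for $L_{n+m}(x)$ with $n+m\ge 1$. Everything else is bookkeeping, and crucially the many cross terms of $L_{n,m}$ never interfere, since the argument only ever reads off the top homogeneous form and the single diagonal value. (One could instead try a Newton-polygon or Eisenstein analysis of $L_{n,m}$ viewed in $\rati(x)[y]$, but the diagonal specialization is both cleaner and self-contained given the tools already established.)
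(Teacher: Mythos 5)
Your proof is correct and follows essentially the same route as the paper: specialize to the diagonal, use $L_{n,m}(x,x)=\binom{n+m}{n}L_{n+m}(x)$ from the Note, and invoke Schur's irreducibility of $L_{n+m}(x)$. The only difference is in the bookkeeping step that prevents a factor from collapsing on the diagonal: you extract this from the top homogeneous form $\tfrac{(-1)^{n+m}}{n!\,m!}x^ny^m$, whereas the paper argues that if $\deg f(x,x)=0$ then $\deg g(x,x)=n+m$, hence $\deg g(x,y)\ge n+m=\deg L_{n,m}(x,y)$, forcing $\deg f(x,y)=0$ and giving the same contradiction.
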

\begin{proof}
Suppose $n\cdot m\neq 0$ and $L_{n,m}(x,y)=f(x,y)\cdot g(x,y)$ with $\deg f(x,y)>0$ and $\deg g(x,y)>0$.
Then according to our Note above, $f(x,x)\cdot g(x,x) = L_{n,m}(x,x)=\binom{n+m}{n}\cdot L_{n+m}(x)$. 
Since $L_k(x)$ is irreducible for all $k\in \natu$ (see \cite{Schur} 
or \cite{Filaseta}), we must have either $\deg f(x,x)=0$ or $\deg g(x,x)=0$. Assuming without loss of generality that $\deg f(x,x)=0$ 
we get $\deg g(x,x) = \deg L_{n+m}(x)=n+m$. Since $\deg g(x,x)\leq \deg g(x,y)$ we deduce from last equality that 
$\deg L_{n,m}(x,y) = n+m\leq \deg g(x,y)$, which contradicts the assumption that $\deg f(x,y)>0$. If $n=0$ or $m=0$ our 
polynomial $L_{n,m}(x,y)$ reduces to the classical one in a single variable, which is irreducible.
\end{proof}

As we have mentioned in the introduction, this is the main distinction of our version of two-variable Laguerre polynomials from those 
considered in \cite{Aktas} and \cite{Dunkl}. As for the orthogonality, one can easily check several examples to see that over the 
domain $\real^2_+=\{(x,y)~|~0<x<\infty,~0<y<\infty\}$ the polynomials $L_{n,m}(x,y)$ are 
neither orthogonal with respect to the weight function $e^{-(x+y)}$ nor with respect to $e^{-(x+y)/2}$. It would be interesting to see 
if $L_{n,m}(x,y)$ are orthogonal with respect to some other weight function.

Laguerre polynomials in one variable have many interesting combinatorial properties. For example, Even and Gillis \cite{Even} in 
1976, showed that an integral of a product of the Laguerre polynomials and $e^{-x}$ can be interpreted as certain 
permutations of a set of objects of different ``colors'' ({\it derangements}).  Jackson \cite{Jackson} in the same year gave a shorter 
proof of this result of Even and Gillis using rook polynomials $R_n(x)$.  These polynomials satisfy 
$$
R_n(x) = \sum\limits_{k=0}^{n} r_k\cdot x^k = n! x^n\cdot L_n(-1/x),
$$ 
where $r_k$ stands for the rook number that counts the number of ways of placing $k$ non-attacking rooks on the full $n\times n$ 
board. We would like to close this paper with a general question if $L_{n,m}(x,y)$ have any combinatorial properties 
similar to those of $L_n(x)$. In particular, the two-dimensional rook numbers and their certain properties 
can be generalized to three and higher dimensions (see, for example, \cite{Alayont}), so we ask if 
$$
n!x^n\cdot m!y^m\cdot L_{n,m}(-1/x,-1/y)
$$ 
have a natural interpretation in terms of rook numbers for three-dimensional boards.

\noindent Siena College, Department of Mathematics\\
515 Loudon Road, Loudonville NY 12211\\ ~ \\
nkrylov@siena.edu and z31li@siena.edu

\end{document}